\numberwithin{equation}{section}
\newtheorem{theorem}{Theorem}
\newtheorem{lemma}[theorem]{Lemma}
\newtheorem*{claim}{Claim}
\theoremstyle{definition}
\newtheorem{definition}[theorem]{Definition}
\newtheorem*{acknowledgment}{Acknowledgment}
\newcommand{\R}{\mathbb{R}}
\newcommand{\N}{\mathbb{N}}
\newcommand{\ud}{\mathrm{d}}
\DeclareMathOperator*{\esssup}{ess\,sup}
\begin{document}

\title{Some remarks on the Lipschitz regularity of Radon transforms}

\author{ Jonas Azzam }
\address{Room 4613, James Clerk Maxwell Building, The King's Buildings, Peter Guthrie Tait Road, Edinburgh, EH9 3FD, UK.}
\email{j.azzam@ed.ac.uk}

\author{ Jonathan Hickman }
\address{Ec 414, Department of mathematics, University of Chicago, 5734 S. University Avenue, Chicago, Illinois,  60637, US.}
\email{jehickman@uchicago.edu}

\author{ Sean Li }
\address{Ry 360J, Department of mathematics, University of Chicago, 5734 S. University Avenue, Chicago, Illinois,  60637, US.}
\email{seanli@math.uchicago.edu}

\begin{abstract} A set in the Euclidean plane is constructed whose image under the classical Radon transform is Lipschitz in every direction. It is also shown that, under mild hypotheses, for any such set the function which maps a direction to the corresponding Lipschitz constant cannot be bounded.
\end{abstract}

\maketitle

\section{Introduction}

Let $R$ denote the classical Radon transform associated to the complex of all affine lines in $\R^2$. That is, given $f \in L^1(\R^2)$ let
\begin{equation*}
R_{\omega}f(t) := \int_{\ell_{\omega}(t)} f\,\ud \mathcal{H}^1 \qquad \textrm{for $(\omega, t) \in S^1 \times \R$}
\end{equation*}
where $\ell_{\omega}(t) := \{x \in \R^2 : \langle x, \omega \rangle = t\}$ is the line in the direction of\footnote{Here and throughout this document, for any $\omega \in S^1$ let $\omega^{\perp}$ denote the vector obtained by rotating $\omega$ by $\pi/2$ clockwise about the origin.} $\omega^{\perp}$ at distance $t$ from the origin, noting that each function $R_{\omega}f$ is well-defined in an almost-everywhere sense. The purpose of this note is to explore the degree of regularity $R_{\omega}f$ can enjoy when $f = \chi_E$ is taken to be the characteristic function of some measurable set $E \subseteq \R^2$. In particular, one is interested in examples of sets $E$ with the property that for \emph{all} directions $\omega \in S^1$ the function $R_{\omega}\chi_E$ is Lipschitz. Both a positive and a negative result are established: the former demonstrates a non-trivial example of a set $E$ whose Radon transform is indeed Lipschitz in every direction whilst the latter shows that, under a mild hypothesis on the set, the Lipschitz constant must necessarily be an unbounded function of the direction. To make this discussion precise, for any measurable set $E$ and $\omega \in S^1$ let $\mathrm{Lip}_E(\omega)$ denote the Lipschitz constant of $R_{\omega}\chi_E$, with the understanding that $\mathrm{Lip}_E(\omega) := \infty$ if $R_{\omega}\chi_E(t) = \infty$ for any $t \in \R$.


\begin{theorem}\label{positive result} There exists a measurable set $E \subseteq \R^2$ with $0 < |E| < \infty$ such that $\mathrm{Lip}_{E}(\omega) < \infty$ for all $\omega \in S^1$. 
\end{theorem}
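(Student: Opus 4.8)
\section*{Proof proposal}

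The plan is to work throughout with the pointwise interpretation $R_{\omega}\chi_E(t) = \mathcal{H}^1(E \cap \ell_{\omega}(t))$, the length of the slice of $E$ by the line $\ell_{\omega}(t)$, and to reduce the Lipschitz estimate in a fixed direction to a statement about how these slices vary with $t$. Parametrising $\ell_{\omega}(t)$ by $s \mapsto t\omega + s\omega^{\perp}$ and translating, one has $R_{\omega}\chi_E(t+h) = R_{\omega}\chi_{E - h\omega}(t)$, so that
\begin{equation*}
R_{\omega}\chi_E(t+h) - R_{\omega}\chi_E(t) = \int_{\R} \big(\chi_{E - h\omega} - \chi_E\big)(t\omega + s\omega^{\perp})\,\ud s,
\end{equation*}
whence
\begin{equation*}
\big|R_{\omega}\chi_E(t+h) - R_{\omega}\chi_E(t)\big| \le \mathcal{H}^1\big((E \triangle (E - h\omega)) \cap \ell_{\omega}(t)\big).
\end{equation*}
Thus it suffices to build $E$ so that, writing $E_{\omega,t} := \{s \in \R : t\omega + s\omega^{\perp} \in E\}$, the slice map $t \mapsto E_{\omega,t}$ is Lipschitz from $\R$ into the measurable subsets of $\R$ under $d(A,B) := \mathcal{H}^1(A \triangle B)$, with
\begin{equation*}
\sup_{t \in \R}\mathcal{H}^1\big((E \triangle (E - h\omega)) \cap \ell_{\omega}(t)\big) \le L(\omega)\,|h| \qquad \textrm{for all } t, h,
\end{equation*}
the constant $L(\omega)$ being permitted to depend on $\omega$. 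This is the criterion I would aim to verify.

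The geometric picture dictates the shape of the construction. There are exactly two mechanisms that destroy Lipschitz regularity of $R_{\omega}\chi_E$ in a given direction: a \emph{jump}, which occurs precisely when $\partial E$ contains a segment of positive length parallel to $\ell_{\omega}(t)$ (equivalently orthogonal to $\omega$); and a \emph{square-root cusp}, which occurs at a smooth extreme point where $\ell_{\omega}$ is tangent to $\partial E$ and the curvature is nonzero, as for a disc, where the chord length grows like $\epsilon^{1/2}$. Since the slice length of a disjoint union is the sum of the slice lengths of the pieces, jumps cannot cancel; hence $E$ must have a boundary containing \emph{no} segment, which rules out polygonal building blocks. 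On the other hand, eliminating square-root cusps in every direction forces the extreme point of $E$ in each direction to be a genuine corner, and a convex curve has only countably many corners. These two constraints together show that $\partial E$ can be neither piecewise linear nor $C^1$ on any arc, so the approach must be to take $\partial E$ to be a rectifiable but nowhere-differentiable, segment-free curve of Hausdorff dimension one, constructed by a self-similar or self-affine iteration (for instance as the subgraph of a suitably tuned Lipschitz, nowhere-differentiable self-affine function, or a dimension-one Koch-type scheme), so that at every extreme point, in every direction, the chord length grows \emph{linearly} rather than with a fractional power.

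Granting such a construction, I would verify the displayed criterion by exploiting the self-similarity directly: the estimate on $\mathcal{H}^1\big((E \triangle (E - h\omega)) \cap \ell_{\omega}(t)\big)$ at scale $|h|$ should reduce, via the scaling relation of the iterated construction, to the same estimate at a coarser scale, so that a single base-case bound propagates to all scales by renormalisation. The boundedness $0 < |E| < \infty$ is immediate from the finite-stage comparison. The main obstacle is the uniform-in-direction estimate: one must show that the near-tangential portion of this nowhere-differentiable boundary, measured along $\ell_{\omega}(t)$, has length at most $O(|h|)$ \emph{simultaneously for every} $\omega \in S^1$, with a finite (though necessarily unbounded, by the companion negative result) constant $L(\omega)$. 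This is precisely the step where the dimension-one tuning must be used to upgrade the expected Hölder growth to genuine Lipschitz growth, and where a compactness or self-similarity argument over the continuum of directions is needed to control the directions that are ``nearly bad'' at some scale of the construction.
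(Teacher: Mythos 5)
There is a genuine gap, and it lies at the structural claim that drives your whole plan: the assertion that ``since the slice length of a disjoint union is the sum of the slice lengths of the pieces, jumps cannot cancel; hence $E$ must have a boundary containing no segment, which rules out polygonal building blocks.'' This is false. Jumps are signed: the slice length of a piece jumps \emph{up} when the line enters it across an edge and \emph{down} when it exits across an edge, so a downward jump of one piece can be exactly offset by upward jumps of other (disjoint) pieces at the same value of $t$. The simplest example is two triangles lying on opposite sides of a common vertical segment: each has a jump discontinuity in the horizontal direction, but their union (a rhombus) has continuous, indeed Lipschitz, Radon transform in that direction. The paper's construction is precisely a polygonal one exploiting this cancellation: $E$ is built from equilateral triangles (a base triangle with iteratively attached ``feet'' at carefully tuned scales $(6/7)\cdot 2^{-j}$ and $2^{-j}$), arranged so that in each of the six bad directions the jump created by an edge of one generation is matched by edges of triangles in the next generation with equal total length; in all other directions a counting argument shows each line meets only $O_{\omega}(1)$ triangles, each contributing a piecewise linear Lipschitz profile. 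Your premise thus rules out exactly the construction that works.

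The false premise then forces you toward an object that cannot exist: you ask for a ``rectifiable but nowhere-differentiable, segment-free curve,'' alternatively ``the subgraph of a suitably tuned Lipschitz, nowhere-differentiable self-affine function.'' By Rademacher's theorem a Lipschitz function is differentiable almost everywhere, and a rectifiable curve, parametrised by arc length, is likewise differentiable a.e.; so ``Lipschitz (or rectifiable) and nowhere differentiable'' is a contradiction, not a tuning problem. Even granting some repaired version of the boundary, the proposal never carries out a construction or proves the key estimate: the uniform-in-direction control of $\mathcal{H}^1\bigl((E \triangle (E - h\omega)) \cap \ell_{\omega}(t)\bigr)$ is explicitly deferred as ``the main obstacle.'' The preliminary reduction you do prove (the symmetric-difference bound on increments of $R_{\omega}\chi_E$) is correct but elementary, and all of the actual difficulty of Theorem \ref{positive result} remains untouched.
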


Such a set $E$ is explicitly constructed in this paper. The construction is not a bounded set, but it does satisfy the following weaker property:
\begin{equation}\label{property}
\omega \mapsto \mathcal{H}^1\{t \in \R : R_{\omega}\chi_{\bar{E}}(t)  \neq 0 \} \quad \textrm{is bounded on $S^1$.}
\end{equation}
Moreover, $\mathrm{Lip}_{E} \colon S^1 \to \R$ is an unbounded function, but it transpires that this is necessary whenever $E$ satisfies \eqref{property}.  

\begin{theorem}\label{negative result}
Suppose $E\subseteq \R^{2}$ is measurable and satisfies \eqref{property}. Then $\mathrm{Lip}_{E} \notin L^{2}(S^1)$ and therefore cannot be bounded.
\end{theorem}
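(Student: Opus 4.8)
The plan is to relate $\|\mathrm{Lip}_E\|_{L^2(S^1)}$ to a fractional Sobolev energy of $\chi_E$ through the Fourier slice theorem, and then to show that this energy is infinite for \emph{every} set of finite positive measure. Write $g_\omega := R_\omega\chi_E$ and argue by contradiction, assuming $\mathrm{Lip}_E\in L^2(S^1)$ (we of course take $|E|>0$, as the statement is false otherwise). Then $g_\omega$ is Lipschitz for a.e.\ $\omega$. Let $M$ be the bound furnished by \eqref{property}. I would first extract two consequences of \eqref{property}. Since a Lipschitz function vanishing off a set of $\mathcal{H}^1$-measure at most $M$ has every nonzero component bounded, hence $\|g_\omega\|_\infty\le M\,\mathrm{Lip}_E(\omega)$, Fubini gives $|E|=\int_{\R}g_\omega\,\ud t\le M^2\,\mathrm{Lip}_E(\omega)<\infty$ for any such $\omega$; thus $\chi_E\in L^2(\R^2)$ and the Fourier machinery is available. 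Secondly, as $g_\omega'=0$ a.e.\ on $\{g_\omega=0\}$, the derivative is supported where $g_\omega\neq 0$, whence
\[
\|g_\omega'\|_{L^2(\R)}^2\le \mathrm{Lip}_E(\omega)^2\cdot\mathcal{H}^1\{t:g_\omega(t)\neq 0\}\le M\,\mathrm{Lip}_E(\omega)^2.
\]

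Next I would invoke the Fourier slice theorem, $\widehat{g_\omega}(\tau)=\hat\chi_E(\tau\omega)$, so that $\widehat{g_\omega'}(\tau)$ is a constant multiple of $\tau\,\hat\chi_E(\tau\omega)$ and, by Plancherel, $\|g_\omega'\|_{L^2}^2$ is a constant multiple of $\int_{\R}\tau^2|\hat\chi_E(\tau\omega)|^2\,\ud\tau$. Integrating over $\omega\in S^1$ and passing to polar coordinates (using that $|\hat\chi_E|^2$ is even) yields, by Tonelli and valid in $[0,\infty]$, the identity
\[
\int_{S^1}\|g_\omega'\|_{L^2}^2\,\ud\omega = c_0\int_{\R^2}|\xi|\,|\hat\chi_E(\xi)|^2\,\ud\xi
\]
for an absolute constant $c_0>0$. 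Combining this with the integrated pointwise bound above gives $c_0\int|\xi||\hat\chi_E|^2\,\ud\xi\le M\|\mathrm{Lip}_E\|_{L^2(S^1)}^2<\infty$. The theorem therefore reduces to the assertion that $\int_{\R^2}|\xi|\,|\hat\chi_E(\xi)|^2\,\ud\xi=\infty$, i.e.\ that $\chi_E\notin\dot H^{1/2}(\R^2)$, for every $E$ with $0<|E|<\infty$.

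This last step is where the main difficulty lies, precisely because $E$ is an \emph{arbitrary} measurable set with no boundary regularity, so I cannot appeal to finite perimeter nor to any classical blow-up of $|E\triangle(E+h)|/|h|$. The plan is to first rewrite the energy in Gagliardo form: a Tonelli computation gives that $\int_{\R^2}|\xi||\hat\chi_E|^2\,\ud\xi$ equals a positive constant multiple of $\int_{\R^2}|h|^{-3}\,|E\triangle(E+h)|\,\ud h$ (both sides possibly infinite). I would then exploit that $D(h):=|E\triangle(E+h)|$ is \emph{subadditive}, since $E\triangle(E+h_1+h_2)\subseteq\big(E\triangle(E+h_1)\big)\cup\big((E+h_1)\triangle(E+h_1+h_2)\big)$. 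Fixing a direction $\theta\in S^1$, the map $r\mapsto D(r\theta)$ is subadditive, continuous, and strictly positive for $r>0$ (the equality $|E\cap(E+r\theta)|=|E|$ would force $\chi_E$ to be $r\theta$-periodic, impossible for $0<|E|<\infty$). Hence $m(\theta):=\min_{\rho\in[1,2]}D(\rho\theta)>0$, and subadditivity gives $D(r\theta)\ge \tfrac1n D(nr\theta)\ge m(\theta)/n$ with $n=\lceil 1/r\rceil$, so that $D(r\theta)\ge \tfrac12 m(\theta)\,r$ for all $0<r\le 1$ and all $\theta$.

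Finally, writing the energy in polar coordinates,
\[
\int_{\R^2}\frac{D(h)}{|h|^3}\,\ud h = \int_{S^1}\int_0^\infty\frac{D(r\theta)}{r^2}\,\ud r\,\ud\theta \ge \int_{S^1}\frac{m(\theta)}{2}\int_0^1\frac{\ud r}{r}\,\ud\theta = \infty,
\]
the inner radial integral already diverging logarithmically for every $\theta$. This establishes $\int|\xi||\hat\chi_E|^2\,\ud\xi=\infty$ and, via the reduction above, contradicts $\mathrm{Lip}_E\in L^2(S^1)$; since boundedness would imply $\mathrm{Lip}_E\in L^2(S^1)$, the theorem follows. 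The crux of the whole argument is the subadditivity trick of the third paragraph, which replaces the perimeter-based lower bound on translation differences—unavailable for a general measurable set—by a soft argument valid for every set of finite positive measure.
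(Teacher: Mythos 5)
Your proof is correct, and its second half takes a genuinely different route from the paper's. The first half coincides with the paper's argument: you combine property \eqref{property} with Rademacher's theorem to bound $\int_{S^1}\|\partial_t R_\omega\chi_E\|_{L^2(\R)}^2\,\ud\omega$ by $M\|\mathrm{Lip}_E\|_{L^2(S^1)}^2$, then use the Fourier slice theorem and polar coordinates to identify that quantity with a constant multiple of $\|\chi_E\|_{\dot{H}^{1/2}(\R^2)}^2$, reducing the theorem to the claim that no set of finite positive measure has characteristic function in $\dot{H}^{1/2}(\R^2)$. (Your preliminary check that the hypotheses already force $|E|<\infty$, so that $\chi_E\in L^1\cap L^2$ and the Fourier manipulations are legitimate, is a point the paper leaves implicit; likewise your remark that $|E|>0$ must be assumed.) Where you diverge is in proving that final claim. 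The paper invokes the symmetric-decreasing-rearrangement inequality for the $\dot{H}^{1/2}$ seminorm to replace $E$ by a ball $B$ with $|B|=|E|$, and then asserts (without detail) that the Gagliardo integral of $\chi_B$ diverges. You avoid rearrangement entirely: writing the seminorm as a constant multiple of $\int_{\R^2}|E\triangle(E+h)|\,|h|^{-3}\,\ud h$, you use that $D(h):=|E\triangle(E+h)|$ is subadditive (via $E\triangle(E+h_1+h_2)\subseteq(E\triangle(E+h_1))\cup((E+h_1)\triangle(E+h_1+h_2))$ and translation invariance), continuous (continuity of translation in $L^1$), and strictly positive for $h\neq 0$ (since $D(r\theta)=0$ would make $E$ invariant under a nonzero translation, forcing $|E|\in\{0,\infty\}$); hence $m(\theta):=\min_{\rho\in[1,2]}D(\rho\theta)>0$ and $D(r\theta)\ge\tfrac{1}{2}m(\theta)r$ for $0<r\le 1$, so the radial integral diverges logarithmically in every direction. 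All of these steps check out. What your route buys: it is soft and self-contained, dispensing with the two external ingredients of the paper (the rearrangement inequality and the ball computation), and it generalizes verbatim to show $\chi_E\notin\dot{H}^{1/2}(\R^n)$ for any measurable $E\subseteq\R^n$ with $0<|E|<\infty$; the paper's route is shorter on the page but outsources more to the literature. Both arguments are valid.
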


These theorems can be viewed as addressing a global variant of an (open) problem raised by Marianna Cs\"ornyei. Given $f \in L^1(\R^2)$ and real numbers $a < b$ consider the truncated Radon transform
\begin{equation*}
R^{a,b}_{\omega}f(t) := \int_{\ell^{a,b}_{\omega}(t)} f\,\ud \mathcal{H}^1 \qquad \textrm{for $(\omega, t) \in S^1 \times \R$}
\end{equation*}
where now $\ell^{a,b}_{\omega}(t) := \{ x \in \R^2 : a < \langle x, \omega^{\perp}\rangle < b \textrm{ and } \langle x, \omega\rangle = t\}$ is a line segment. Cs\"ornyei's question asks whether there exists some measurable $E \subseteq \R^2$ of finite, non-zero measure such that for every choice of $a,b$ and $\omega$ the function $R^{a,b}_{\omega}\chi_E$ is Lipschitz. The results and construction of this paper do not offer any direct progress on this problem. 

The analogous questions are easy in $\R^n$ if $n \geq 3$.  Indeed, the cross sectional volume of a ball in $\R^n$ in any direction behaves like $(r^2 - t^2)^{(n-1)/2}$, which is Lipschitz (and thus uniformly Lipschitz over $\omega \in S^{n-1}$) if $n \geq 3$.  This holds even for the local version of the problem discussed in the previous paragraph.

\begin{acknowledgment} The authors would like to thank Marianna Cs\"ornyei for suggesting the problem.  S.L. was supported by NSF grant DMS-1600804.
\end{acknowledgment} 

\section{Proofs}

To begin the proof of Theorem \ref{negative result} is presented, which is a concise Fourier analytical argument.

\begin{proof}[Proof (of Theorem \ref{negative result})]
Let $\omega \in S^1$ and suppose 
\begin{equation}\label{finite Lipschitz}
\mathrm{Lip}_E(\omega) < \infty.
\end{equation}
 By Rademacher's theorem,
\begin{equation*}
\esssup_{t\in \R} \big|\partial_t R_{\omega} \chi_E(t)\big| \leq \mathrm{Lip}_E(\omega)
\end{equation*}
and hypothesis \eqref{property} implies that $\partial_t R_{\omega} \chi_E(t)$ is non-zero only for $t$ belonging to a set of $\mathcal{H}^1$-measure at most $M > 0$, where $M$ is independent of $\omega$. Thus, if one assumes $\mathrm{Lip}_E \in L^2(S^1)$ so that \eqref{finite Lipschitz} holds for almost every $\omega \in S^1$, then
\begin{equation*}
\int_{S^1}\int_{\R}\big|\partial_t R_{\omega} \chi_E(t)\big|^{2}\,\ud t \ud\omega \leq M \|\mathrm{Lip}_E\|_{L^{2}(S^1)}^{2}.
\end{equation*}
On the other hand, applying the Fourier transform in the $t$ variable, applying the Fourier slice theorem and changing from polar to Cartesian coordinates establishes the well-known identity
\begin{equation*}
\big\|\partial_tR\chi_E\big\|_{L^2(S^1 \times \R)}^2 = 8\pi^2\|\chi_E\|_{\dot{H}^{\frac{1}{2}}(\R^2)}^2,
\end{equation*}
where the right-hand expression involves the homogeneous Sobolev norm. Hence,
\begin{equation*}
\|\chi_E\|_{\dot{H}^{\frac{1}{2}}(\R^2)}^{2} \leq \frac{M}{8\pi^{2}}\|\mathrm{Lip}_E\|_{L^{2}(S^1)}^{2}<\infty.
\end{equation*}
If $f \in \dot{H}(\R^2)$ and $f^{*}$ is the symmetric decreasing rearrangement of $f$, then a classical Sobolev space rearrangement inequality (see, for instance, \cite[Lemma 7.17]{Lieb1997}) states that
\begin{equation*}
\|f\|_{\dot{H}^{\frac{1}{2}}(\R^2)}^{2}  \geq \|f^{*}\|_{\dot{H}^{\frac{1}{2}}(\R^2)}^{2}.
\end{equation*}
Furthermore, recall there is a constant $c$ such that the integral formula
\begin{equation*}
\|f\|_{\dot{H}^{\frac{1}{2}}(\R^2)}^{2} = c\iint_{\R^2 \times \R^2} \frac{|f(x)-f(y)|^{2}}{|x-y|^{3}}\,\ud x\ud y
\end{equation*}
holds for any $f \in \dot{H}^{\frac{1}{2}}(\R^2)$ (see, for instance,  \cite[Theorem 7.12]{Lieb1997}). Combining these facts, if $B$ is the ball centered at $0$ such that $|B|=|E|$, then $\chi_E^{*}=\chi_{B}$ and so
\begin{align*}
\|\chi_E\|_{\dot{H}^{\frac{1}{2}}(\R^2)}^{2} \geq c\iint_{\R^2 \times \R^2} \frac{|\chi_{B}(x)-\chi_{B}(y)|^{2}}{|x-y|^{3}}\,\ud x\ud y.
\end{align*}
However, it is not hard to show that this last integral is infinite, and one obtains a contradiction. 
\end{proof}

The proof of the Theorem \ref{positive result} follows some simple observations concerning configurations of triangles in the plane.

\begin{definition}  A \emph{standard triangle} is a closed equilateral triangle $T \subseteq \R^2$ with the property that one edge is parallel to the $x$-axis. The common side length of such a triangle is denoted by $\ell(T)$. 
\end{definition}

Letting $\omega_k := (\cos  k \pi /3, \sin  k \pi /3) \in S^1$ for $k = 0,1,2$, it follows that the set of (tangent) directions of the edges of a standard triangle is $\{\pm\omega_0, \pm\omega_1, \pm\omega_2\}$. One therefore immediately observes that if $T$ is a standard triangle and $\omega \in \{\pm\omega_0^{\perp}, \pm\omega_1^{\perp}, \pm\omega_2^{\perp}\}$, then $R_{\omega}\chi_T$ is discontinuous. In particular, for each such direction $R_{\omega}\chi_T$ admits a single jump discontinuity of height $\ell(T)$. Away from these directions, however, the mappings behave well and it is useful to record the following elementary geometric observation.

\begin{lemma}\label{triangle Lipschitz} Let $T$ be any standard triangle.  If $\omega \in S^1 \setminus \{\pm\omega_0^{\perp}, \pm\omega_1^{\perp}, \pm\omega_2^{\perp}\}$ then $R_{\omega}\chi_T$ is piecewise linear and Lipschitz. Furthermore, the Lipschitz constant depends only on the direction $\omega$ and not on the choice of underlying standard triangle. 
\end{lemma}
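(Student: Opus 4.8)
The plan is to interpret $R_{\omega}\chi_T(t)$ geometrically as the length of the chord $\ell_{\omega}(t) \cap T$ and to track how the two endpoints of this chord move as $t$ varies. First I would fix $\omega \in S^1 \setminus \{\pm\omega_0^{\perp},\pm\omega_1^{\perp},\pm\omega_2^{\perp}\}$ and record the orthogonal projections $t_1 \leq t_2 \leq t_3$ of the three vertices of $T$ onto the line $\R\omega$, so that $R_{\omega}\chi_T$ vanishes outside $[t_1,t_3]$. The hypothesis on $\omega$ means no edge of $T$ is perpendicular to $\omega$ (equivalently $\langle \omega_k, \omega\rangle \neq 0$ for each $k$); since the difference of any two vertices of a triangle points along an edge direction $\pm\omega_k$, it follows that the three projected values are in fact distinct, $t_1 < t_2 < t_3$.

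For $t$ in the interior of $[t_1,t_3]$ the chord $\ell_{\omega}(t)\cap T$ is a single segment whose two endpoints lie on specific edges of $T$, and as $t$ increases these endpoints trace those edges affinely. Concretely, parametrising an edge in its tangent direction $\omega_k$ as $p_0 + s\,\omega_k$ and imposing $\langle p_0 + s\,\omega_k,\, \omega\rangle = t$ shows that the endpoint's coordinate along the chord direction $\omega^{\perp}$, namely $\langle p_0 + s\,\omega_k,\, \omega^{\perp}\rangle$, is an affine function of $t$ with slope $\langle \omega_k,\omega^{\perp}\rangle / \langle \omega_k,\omega\rangle$. Since $\omega^{\perp}$ is a unit vector, $R_{\omega}\chi_T(t)$ is exactly the difference of the two endpoints' $\omega^{\perp}$-coordinates; hence on each subinterval where both endpoints remain on fixed edges it is affine in $t$, with the breakpoints occurring precisely at $t_1,t_2,t_3$, where the sweeping line crosses a vertex and an endpoint switches edges. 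This gives that $R_{\omega}\chi_T$ is continuous and piecewise linear with at most these three breakpoints.

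Next I would bound the slopes. Each slope of $R_{\omega}\chi_T$ is a difference of two terms of the form $\langle \omega_k,\omega^{\perp}\rangle / \langle \omega_k,\omega\rangle = \pm\tan\beta_k$, where $\beta_k$ is the angle between $\omega$ and the edge direction $\omega_k$. The excluded directions $\pm\omega_k^{\perp}$ are exactly those for which $\langle \omega_k,\omega\rangle = 0$, i.e.\ $\beta_k = \pi/2$, so away from them each such quantity is finite. Consequently the absolute value of every slope is at most
\[
\sum_{k=0}^{2} \left| \frac{\langle \omega_k,\omega^{\perp}\rangle}{\langle \omega_k,\omega\rangle} \right|,
\]
a quantity depending only on $\omega$, and the Lipschitz constant of $R_{\omega}\chi_T$ is bounded by this same expression.

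Finally, to see the Lipschitz constant is independent of the underlying triangle, I would observe that these slopes involve only the fixed directions $\omega_0,\omega_1,\omega_2$ and $\omega$: translating $T$ merely shifts the $t_i$ and changes no slope, while dilating $T$ by a factor $\lambda$ scales both the variable $t$ and the chord length by $\lambda$, leaving every slope unchanged. As any standard triangle is obtained from a fixed one by a translation and a dilation, the maximal absolute slope, and hence the Lipschitz constant, depends only on $\omega$. I expect the only steps needing genuine care to be verifying that the endpoints really move affinely and that the transitions occur exactly at the vertex projections; once these are in hand, the slope estimate and the invariance argument are routine.
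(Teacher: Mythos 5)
Your proof is correct. Note, however, that the paper offers no proof of this lemma at all: it is stated as an ``elementary geometric observation'' and used as a black box, so there is nothing to compare against; your chord-endpoint argument is exactly the kind of routine verification the authors are implicitly invoking. Two small remarks. First, your core computation already settles everything: the slopes of the affine pieces are differences of terms $\langle \omega_k,\omega^{\perp}\rangle/\langle \omega_k,\omega\rangle$, which involve only $\omega$ and the fixed edge directions $\omega_0,\omega_1,\omega_2$, so the bound $\sum_{k=0}^{2}\bigl|\langle \omega_k,\omega^{\perp}\rangle/\langle \omega_k,\omega\rangle\bigr|$ depends only on $\omega$, which is the whole content of the uniformity claim. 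Second, your closing reduction (``any standard triangle is obtained from a fixed one by a translation and a dilation'') is not literally true: standard triangles come in two orientations, apex-up and apex-down, and the paper genuinely uses both (the feet of a triangle have the opposite orientation to the parent triangle), and these are not related by positive dilations and translations. This does not damage your proof, because both orientations have the same set of edge directions $\{\pm\omega_0,\pm\omega_1,\pm\omega_2\}$ and your slope bound makes no reference to which pairs of edges the chord endpoints lie on; but you should either delete that final reduction as redundant or amend it to account for the reflection.
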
 

The construction of the set $E$ proceeds by taking a standard triangle $T_1$ and modifying it so as to ameliorate the discontinuities. 

\begin{definition}[Feet] Given a standard triangle $T$ and $r > 0$ define the \emph{$r$-feet} of $T$ to be the three standard triangles of side-length $r$ formed by extending the edges of $T$ by $r$ on each side, as demonstrated in Figure \ref{feet figure}. Given an $r$-foot $\tau$ of $T$ its \emph{outer edge}, denoted $\mathrm{out}(\tau)$, is the edge which lies opposite the common vertex of $\tau$ and $T$.
\end{definition} 

\begin{figure}
\centering
\begin{tikzpicture}[
        scale=2.5,
        IS/.style={blue, thick},
        LM/.style={red, thick},
        axis/.style={very thick, ->, >=stealth', line join=miter},
        important line/.style={thick}, dashed line/.style={dashed, thin},
       dotted line/.style={dotted, thin},
        every node/.style={color=black},
        dot/.style={circle,fill=black,minimum size=4pt,inner sep=0pt,
            outer sep=-1pt},
    ]
   

\path[] (0.5,0)
        node[dot] (v1) {};
\path[] (1.5,0)
        node[dot] (v2) {};
\path[] (1,0.866)
        node[dot] (v3) {};

\path[] (1,0.4)
        node[]  {$T$};


\path[] (1.75,-0.433)
        node[dot] (x1) {};
\path[] (2,0)
        node[dot] (x2) {};
 \draw[important line]
            (x1) coordinate (es) -- (x2) coordinate (ee);

\path[] (1.75,-0.2)
        node[]  {$\tau_1$};


\path[] (1.25,1.299)
        node[dot] (y1) {};
					
\path[] (0.75,1.299)
        node[dot] (y2) {};
 \draw[important line]
            (y1) coordinate (es) -- (y2) coordinate (ee);

\path[] (1, 0.866+0.232)
        node[]  {$\tau_0$};

				
\path[] (0,0)
        node[dot] (z1) {};				
\path[] (0.25,-0.433)
        node[dot] (z2) {};
 \draw[important line]
            (z1) coordinate (es) -- (z2) coordinate (ee);

\path[] (0.25,-0.2)
        node[]  {$\tau_2$};


\draw[important line]
            (x1) coordinate (es) -- (y2) coordinate (ee);
\draw[important line]
            (y1) coordinate (es) -- (z2) coordinate (ee);
\draw[important line]
            (z1) coordinate (es) -- (x2) coordinate (ee);
\end{tikzpicture}
\caption{A standard triangle $T$ and its $r$-feet $\tau_0, \tau_1, \tau_2$.} \label{feet figure}
\end{figure}
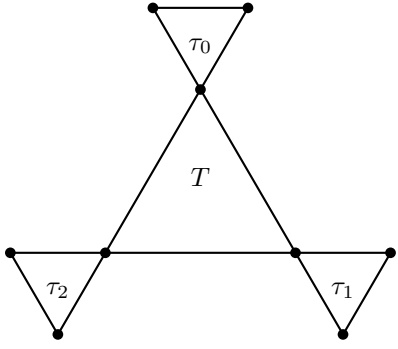

\begin{definition}[Cells] Given a standard triangle $T$, the \emph{cell} $\mathrm{Cell}(T)$ is the subset of $T$ defined by
\begin{equation*}
\mathrm{Cell}(T) := T \setminus \Big(\mathrm{int}(T^0) \cup \bigcup_{k=0}^2 \tau_k^0 \Big)
\end{equation*}
where $T^0$ is the unique standard triangle whose $\ell(T^0)/2$-feet $\tau_0^0, \tau_1^0, \tau_2^0$ have outer edges contained in the edges of $T$. It is easy to see $T^0$ is concentric to $T$ and $\ell(T^0) = (2/7)\cdot\ell(T)$. 
\end{definition} 

\begin{proof}[Proof of Theorem \ref{positive result}] A sequence of concentric standard triangles $\{T_j\}_{j=1}^{\infty}$will be recursively constructed: the set $E$ will then be defined in terms of the $T_j$ and feet and cells associated to these triangles. Let $T_1$ denote a standard triangle centred at 0 with $\ell(T_1) = 1$ and suppose $T_1, \dots, T_{j}$ have all been constructed for some $j \geq 1$. 
\begin{itemize}
\item If $j$ is odd, then let $\{\tau_{j,k}\}_{k=0}^2$ denote the three $(6/7)\cdot 2^{-j}$-feet of $T_j$.
\item If $j$ is even, then let $\{\tau_{j,k}\}_{k=0}^2$ denote the three $2^{-j}$-feet of $T_j$.
\end{itemize}
In either case define $T_{j+1}$ to be the unique triangle whose edges contain the sets $\mathrm{out}(\tau_{j,k})$ for $k=0,1,2$. Here the labeling of the feet is chosen so that the line through the origin in the direction $\omega_k^{\perp}$ bisects each $\tau_{j,k}$ for $k = 0,1,2$.

Define $E$ to be the set
\begin{equation}\label{E definition}
(\mathrm{int}\,T_1 \cap \mathrm{Cell}(T_1)) \cup \bigcup_{\substack{j \in \N \\ \mathrm{odd}}} \bigcup_{k=0}^2 \tau_{j,k} \setminus \mathrm{out}(\tau_{j,k})  \cup \bigcup_{\substack{j \in \N \\ \mathrm{even}}} \bigcup_{k=0}^2 \mathrm{Cell}(\tau_{j,k}) \setminus \mathrm{out}(\tau_{j,k}) 
\end{equation}
and $\mathcal{T} := \{T_1\} \cup \{ \tau_{j,k} : j \in \N,\, k =0,1,2 \}$ and note that these objects satisfy the following basic properties.
\begin{enumerate}[i)]
\item The triangles belonging to $\mathcal{T}$ are mutually disjoint and therefore
\begin{equation*}
|E| < |T_1| + 3\sum_{j=1}^{\infty} |\tau_{j,1}| < \frac{\sqrt{3}}{4}\big(1 + 3\sum_{j=1}^{\infty} 2^{-2j}\big) = \frac{\sqrt{3}}{2}.
\end{equation*}
\item Given any affine line $\ell \subset \R^2$ one may crudely estimate
\begin{equation*}
\mathcal{H}^1(\ell \cap \bar{E}) \leq \mathcal{H}^1(\ell \cap T_1) + 3\sum_{j=1}^{\infty} \mathcal{H}^1(\ell \cap \tau_{j,1}) < \frac{\sqrt{3}}{2}\big(1 + 3\sum_{j=1}^{\infty} 2^{-j}\big)= 2\sqrt{3}.
\end{equation*}
\item The triangles belonging to $\mathcal{T}$ are well-spaced and, in particular, for any $0 \leq k \leq 2$ one may readily deduce from the construction that 
 \begin{equation}\label{spacing}
\min \{ \mathrm{dist}(\tau_{j,k}, \tau_{i,k}) : 0 \leq i \leq j-1\} \geq \left\{ \begin{array}{ll} 
2^{j-2} & \textrm{if $j \geq 2$} \\
0 & \textrm{if $j = 1$}
\end{array}\right. ,
\end{equation}
where $\tau_{0,k} := T_1$. 
\end{enumerate}

It remains to verify that the $R_{\omega} \chi_E$ satisfy the property described in Theorem \ref{positive result}. The first step is to show that most lines intersect few of the constituent triangles of $E$. 

\begin{claim} Let $\omega \in S^1 \setminus \{\pm\omega_0, \pm\omega_1, \pm\omega_2\}$ and suppose $\ell$ is a line with direction $\omega^{\perp}$. Then $\ell$ intersects at most $O_{\omega}(1)$ of the triangles belonging to $\mathcal{T}$. 
\end{claim}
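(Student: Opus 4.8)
The plan is to bound, separately for each $k \in \{0,1,2\}$, the number of feet $\tau_{j,k}$ met by $\ell$, since the total count is at most $1 + \sum_{k=0}^2 \#\{ j \in \N : \ell \cap \tau_{j,k} \neq \emptyset\}$, the extra $1$ accounting for $T_1$. Fix $k$ and let $L_k$ be the line through the origin in direction $\omega_k^\perp$. By construction $L_k$ bisects each $\tau_{j,k}$, so the centroid $c_{j,k}$ of $\tau_{j,k}$ lies on $L_k$; moreover $\tau_{j,k}$ has side length at most $2^{-j}$ and is therefore contained in the disc $B(c_{j,k}, 2^{-j})$. Writing $a_j$ for the signed arclength coordinate of $c_{j,k}$ along $L_k$, the spacing estimate \eqref{spacing} together with $\mathrm{dist}(\tau_{j,k}, \tau_{i,k}) \le |c_{j,k} - c_{i,k}| = |a_j - a_i|$ yields the separation
\[
|a_j - a_i| \ge 2^{j-2} \qquad \text{whenever } 1 \le i < j.
\]
The hypothesis $\omega \notin \{\pm\omega_0, \pm\omega_1, \pm\omega_2\}$ enters precisely here: it guarantees $\omega^\perp \neq \pm\omega_k^\perp$, so $\ell$ is not parallel to $L_k$ and crosses it transversally at a single point $p$, making the angle $\theta_k = \theta_k(\omega)$ between $\ell$ and $L_k$ satisfy $\sin\theta_k > 0$, with a lower bound depending only on $\omega$.

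The heart of the argument is a separation-versus-smallness dichotomy. Writing $a_p$ for the coordinate of $p$, and using that the distance to $\ell$ grows linearly along $L_k$ at rate $\sin\theta_k$, one has $\mathrm{dist}(c_{j,k}, \ell) = |a_j - a_p|\sin\theta_k$. Because $\tau_{j,k} \subseteq B(c_{j,k}, 2^{-j})$, a necessary condition for $\ell \cap \tau_{j,k} \neq \emptyset$ is
\[
|a_j - a_p| \le \frac{2^{-j}}{\sin\theta_k}.
\]
If two distinct indices $j_1 < j_2$ both satisfy this, the triangle inequality gives $|a_{j_1} - a_{j_2}| \le 2\cdot 2^{-j_1}/\sin\theta_k$, whereas the separation estimate gives $|a_{j_1} - a_{j_2}| \ge 2^{j_2 - 2} \ge 2^{j_1 - 1}$. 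Combining these forces $2^{2j_1} \le 4/\sin\theta_k$, that is, $j_1 \le \tfrac12 \log_2(4/\sin\theta_k)$.

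Consequently every index $j$ for which $\ell$ meets $\tau_{j,k}$, with the single possible exception of the largest such index, is bounded above by $\tfrac12\log_2(4/\sin\theta_k)$; hence $\#\{j : \ell \cap \tau_{j,k} \neq \emptyset\} \le \tfrac12\log_2(4/\sin\theta_k) + 1$, a quantity depending only on $\omega$ and uniform over all lines $\ell$ of direction $\omega^\perp$. Summing over $k = 0,1,2$ and adding the contribution of $T_1$ completes the proof with an explicit $O_\omega(1)$ bound. The only genuinely delicate points are geometric bookkeeping --- verifying that the centroids lie on $L_k$ and that the side lengths are at most $2^{-j}$ --- and, more conceptually, recognising that the transversality furnished by the hypothesis is exactly what converts the rapid spacing \eqref{spacing} into a uniform cap on the number of intersections. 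I expect the estimate to degenerate, the bound growing like $\log(1/\sin\theta_k)$, as $\omega$ approaches one of the forbidden directions, which is consistent with the eventual unboundedness of $\mathrm{Lip}_E$.
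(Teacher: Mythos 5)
Your proof is correct and rests on the same mechanism as the paper's: playing the exponential spacing estimate \eqref{spacing} against the exponentially shrinking size of the feet and their alignment along the axis $L_k$, with transversality (quantified by the angle between $\omega^\perp$ and $\omega_k^\perp$) yielding a bound logarithmic in the inverse angle. The only differences are presentational --- you argue directly (every hit index but the largest is at most $\tfrac12\log_2(4/\sin\theta_k)$) and sum over $k$, whereas the paper fixes one family by rotational symmetry and runs a contradiction via two far-out triangles forcing $\ell$ to be nearly parallel to $L_1$ --- so the substance is essentially identical.
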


\begin{proof}  By the rotational symmetry of $E$ it suffices to prove the lemma with $\mathcal{T}$ replaced by $\mathcal{T}_1 :=  \{ \tau_{j,1} : j \in \N\}$. 

From the choice of direction, there exists some $N \in \N$ such that 
\begin{equation*}
2^{-N} < \angle(\omega^{\perp}, \omega_1^{\perp}) \leq 2^{-N+1}
\end{equation*}
where $0 \leq \angle(v, w) \leq \pi/2$ denotes the (unsigned) acute angle between the directions $v,w \in S^1$.
 
Suppose that there exist at least $M := \max \{N,6\}$ triangles belonging to $\mathcal{T}_1$ which intersect $\ell$. For notational simplicity let $\tau_0 := T_1$ and $\tau_{j} := \tau_{j,1}$ for all $j \in \N$. Then there exist some $j_1, j_2 \geq M-2$ with $j_1 > j_2$ such that $\ell \cap \tau_{j_1}$ and $\ell \cap \tau_{j_2}$ are non-empty.
Moreover, by the definition of $\mathcal{T}_1$ and \eqref{spacing} it follows that there exist $x_1, x_2 \in \ell$ which satisfy $\mathrm{dist}(x_1, x_2) \geq 2^{M-4}$.  We also have that $x_1,x_2$ lie at a distance at most $2^{-M+1}$ from $\mathrm{span}\{\omega_1^{\perp}\}$ as $\tau_j = \tau_{j,1}$ lies in a $2^{-j+1}$ neighborhood of $\omega_1^\perp$. Consequently, 
\begin{equation*}
\angle(\omega^{\perp}, \omega_1^{\perp}) \leq \tan \angle(\omega^{\perp}, \omega_1^{\perp}) \leq 2^{-2M+6} \leq 2^{-N}
\end{equation*}
which contradicts the definition of $N$. Thus, there can be at most $\max\{N, 6\}$ triangles belonging to $\mathcal{T}_1$ which intersect $\ell$, giving the desired bound. 
\end{proof}

Suppose $\omega \in S^1 \setminus \{\pm\omega_0, \pm\omega_0^{\perp}, \pm\omega_1, \pm\omega_1^{\perp}, \pm\omega_2, \pm\omega_2^{\perp}\}$ and write $R_{\omega} \chi_E$ as a sum of $R_{\omega} \chi_{\tau}$ as $\tau$ varies over all sets appearing in the essentially disjoint union on the right-hand side of \eqref{E definition}. By Lemma \ref{triangle Lipschitz}, each of the functions appearing in this sum is piecewise linear and Lipschitz and the resulting Lipschitz constants are uniformly bounded. Furthermore, the above claim ensures that for each $t \in \R$ the values $R_{\omega}\chi_{\tau}(t)$ are non-zero for only $O_{\omega}(1)$ choices of $\tau$. Combining these observations one deduces that $R_{\omega} \chi_{E}$ is itself a Lipschitz function.

Now suppose $\omega \in \{\pm\omega_0^{\perp}, \pm\omega_1^{\perp}, \pm\omega_2^{\perp}\}$. Here one can immediately see from the construction of the set that the alignment and relative proportions of the feet ensure that $R_{\omega} \chi_E$ is a Lipschitz, countably-piecewise linear function.  The idea is that for $k \in \{0,1,2\}$, the edge of $\tau_{j,k}$ (resp. $\mathrm{Cell}(T_1)$) orthogonal to the direction $\omega_k^\perp$ is matched by the edges of $\tau_{j+1,k+1}$ and $\tau_{j+1,k+2}$ (resp. $\tau_{1,k+1}$ and $\tau_{1,k+2}$), where the addition is taken modulo 3, in a way that makes the resulting Radon transform Lipschitz.

It remains to consider the case $\omega \in \{\pm\omega_0, \pm\omega_1, \pm\omega_2\}$. By rotational symmetry one may assume without loss of generality that $\omega = \omega_0$. It is clear that $F := R_{\omega} \chi_E$ is Lipschitz and bounded on the restricted domain $\{t \in \R : |t| \geq 1/4\}$ and it therefore suffices consider the behaviour of $F$ on $[-1/2,1/2]$. Since $F$ is an even function the problem further reduces to showing $F$ is Lipschitz on $[-1/2, 0]$. 

Letting
\begin{equation*}
E_1 := (\mathrm{int}\,T_1 \cap \mathrm{Cell}(T_1)) \cup (\tau_{1,1} \setminus \mathrm{out}(\tau_{1,1}))
\end{equation*}
and $f_j(t):= R_{\omega}\chi_{2^{-2j}E_1}(t)$ one may easily observe that
\begin{equation*}
F(t) = \sum_{j=0}^{\infty} f_j(t) 
\end{equation*}
for all $t \in [-1/2, 0]$. This follows from the fact that when $j$ is even $\tau_{j,0}$ and $\tau_{j+1,0}$ together behave like a $2^{-j}$-scaled copy of $E_1$ with respect to Radon transforms in the direction $\omega_0$.  Furthermore, for $i=1,\dots, 5$ define $I^i_j := 2^{-2j}I^i$ where
\begin{gather*}
I^1 := [-1/2, -2/7], \quad I^2 := [-2/7, -3/14], \quad I^3 := [-3/14, -1/7], \\
I^4:= [-1/7, -1/14], \quad I^5 := [-1/14, 0]
\end{gather*}
and, in addition, let $I_j^0 := [-1/2, -2^{-2j}/2]$. Then for $j \in \N_0$ it follows that 
\begin{equation*}
f_j(t) := \sqrt{3} \times \left\{\begin{array}{ll}
0 & \textrm{if $t \in I^0_j$} \\
t + 2^{-2j-1} & \textrm{if $t \in I^1_j$} \\
(3/7)\cdot 2^{-2j-1} & \textrm{if $t \in I^2_j$} \\
3t + (3/7)\cdot2^{-2j+1} & \textrm{if $t \in I^3_j$} \\
t + (1/7)\cdot2^{-2j+2} & \textrm{if $t \in I^4_j$} \\
2^{-2j-1} & \textrm{if $t \in I^5_j$} 
\end{array} \right. .
\end{equation*}

Letting $F_k(t) := \sum_{j=0}^k f_j(t)$ suppose $k \geq 2$ and note the following.
\begin{itemize}
\item For $t \in I^0_k$ one has $f_k(t) = 0$ and it follows that $F_k(t) = F_{k-1}(t)$.
\item For $t \in I^5_{k-1}$ and $0 \leq j \leq k-1$ one has $f_j(t) = \sqrt{3}\cdot 2^{-2j-1}$ and therefore $F_k(t) = f_k(t) + C_{k-1}$ where $C_{k-1} := (2/\sqrt{3})\cdot(1 - 2^{-2k})$.
\item Fixing $t \in I^1_{k}$ and writing $F_k(t) = F_{k-2}(t) + f_{k-1}(t) + f_{k-2}(t)$, since  $I^1_k \subset I^5_{k-2}$  one may deduce that
\begin{equation*}
F_k(t) = C_{k-2} + f_{k-1}(t) + f_{k}(t).
\end{equation*}
\end{itemize}
Note that $[-1/2, 0] =  I^0_k \cup I^1_{k} \cup I^5_{k-1}$ and so the above analysis determines the value of $F_k$ on the whole interval of interest.

It now follows by induction that each $F_k$ is a piecewise linear function on $[-1/2, 0]$ with derivative (where defined) bounded above by $3\sqrt{3}$ and, consequently, $F$ is Lipschitz.

\end{proof}

%

\bibliography{Reference}
\bibliographystyle{amsplain}

\end{document}